\documentclass[11pt,a4paper]{article}
\usepackage[T1]{fontenc}
\usepackage[utf8]{inputenc}
\usepackage{lmodern}
\usepackage[margin=28mm]{geometry}
\usepackage{amsmath,amssymb,amsthm}
\usepackage[hidelinks]{hyperref}
\newtheorem{theorem}{Theorem}[section]
\newtheorem{lemma}[theorem]{Lemma}
\newtheorem{proposition}[theorem]{Proposition}
\newtheorem{corollary}[theorem]{Corollary}
\theoremstyle{definition}
\newtheorem{definition}[theorem]{Definition}

\newcommand{\C}{\mathcal C}
\newcommand{\Hom}{\operatorname{Hom}}
\newcommand{\Lie}{\operatorname{Lie}}
\newcommand{\Const}{\operatorname{Const}}
\newcommand{\LL}{\widehat{\mathbb L}}
\newcommand{\id}{\operatorname{id}}
\title{Continuous Comparison of Free Simplicial\protect\\
Prounipotent and Pro-\(p\) Resolutions}
\author{Andrey Mikhovich}
\date{}
\begin{document}
\maketitle
\begin{abstract}
We give a continuous version of the comparison theorem for free simplicial
resolutions in the categories of prounipotent groups over a field of
characteristic zero and of pro-$p$ groups. The resolutions carry free bases
compatible with degeneracies; their ranks may be infinite. In characteristic
zero the lifting step follows from continuous linear splittings and the
universal property of completed free Lie algebras. In the pro-$p$ case it
follows from the classical projectivity of free pro-$p$ groups. A relative
lifting argument and an explicit simplicial cylinder establish homotopy
uniqueness. We formulate the free bases using actual surjections in the
simplex category, so degeneracy words related by simplicial identities are
identified from the outset, and we spell out the full matching objects used in
the lifting argument. The question arose naturally in our preceding work on
Bousfield--Kan completions of subcontractible presentations.
\end{abstract}

\section{The question and the category}

In our preceding work \cite{Mikhovich}, the completed simplicial group
associated with a finite subcontractible presentation is compared with a
constant free group after its positive homotopy groups have been shown to
vanish. This led naturally to the question of how Keune's comparison
argument \cite{Keune} extends to free simplicial prounipotent resolutions,
especially when higher degrees have infinite rank. The issue is continuity:
arbitrary lifts of infinitely many free generators need not form a
convergent family. The same question arises for pro-$p$ resolutions in the
simplicial profinite framework of Quillen \cite{Quillen}.

We write $\C$ for either of the following categories: prounipotent
group schemes over a fixed field $k$ of characteristic zero, with
group-scheme morphisms; or pro-$p$ groups, with continuous homomorphisms.
Products, kernels and coproducts are taken in $\C$.
In the first category we use the equivalence with inverse limits of
finite-dimensional nilpotent $k$-Lie algebras
\cite[Appendix~A]{QuillenRHT}, \cite[Section~2]{HainMatsumoto}.
Their topology is the inverse-limit topology with discrete
finite-dimensional quotients, not an analytic topology on $k$.
An epimorphism below means a surjective morphism (faithfully flat in the
group-scheme terminology); on Lie algebras it is a continuous surjection.
Throughout, capital letters $U,V,\ldots$ denote objects of $\C$; in the
prounipotent case we write $\mathfrak u=\Lie(U)$, $\mathfrak v=\Lie(V)$,
and similarly for other objects.

For a set $S$, let $S^+=S\sqcup\{*\}$ have isolated points $s\in S$
and cofinite neighbourhoods of $*$. Write $F_p(S)$ for the free pro-$p$
group on a family indexed by $S$ converging to $1$. Thus, for every
pro-$p$ group $U$, continuous homomorphisms $F_p(S)\to U$ correspond to
continuous pointed maps $S^+\to U$. This convention applies also to infinite $S$.
In characteristic zero put
\[
 E_S=k^S,\qquad F_k(S)=\exp\LL(E_S),
\]
where $k^S$ has its product topology and $\LL$ is defined below.
The canonical generators are $\exp(e_s)$, where $e_s$ is the coordinate
vector. Their images in every finite-dimensional unipotent quotient are
$1$ for all but finitely many $s$. More generally, such convergent
assignments of generators have the expected universal property, by
taking logarithms. We use $F(S)$ for either construction.

For a finite partition $S=S_1\sqcup\cdots\sqcup S_t$, these conventions give
\begin{equation}\label{eq:partition}
 F(S)\cong\coprod_{j=1}^t F(S_j).
\end{equation}
Indeed, a convergent assignment on $S$ is precisely a convergent
assignment on each of its finitely many parts. This statement concerns
coproducts in $\C$, not abstract free products of underlying groups.

\section{Continuous lifting}

We first establish the input needed in all simplicial degrees.

\begin{lemma}\label{lem:linear}
A continuous surjective linear map $q:V\to W$ between linearly compact
$k$-spaces admits a continuous linear section.
\end{lemma}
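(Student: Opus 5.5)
The plan is to use Lefschetz duality for linearly compact spaces. This duality is an anti-equivalence between linearly compact $k$-spaces with continuous linear maps and discrete $k$-vector spaces with all linear maps. It is given by $V\mapsto V^\vee=\Hom_{\mathrm{cont}}(V,k)$, with inverse $D\mapsto D^*=\Hom_k(D,k)$ carrying the topology of pointwise convergence. Every linearly compact space is topologically isomorphic to $k^I$ for some set $I$, namely $(V^\vee)^*$ after choosing a basis of $V^\vee$. Under this duality the surjection $q:V\to W$ corresponds to an injective map $q^\vee:W^\vee\to V^\vee$ of discrete vector spaces.

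The key step is to check this injectivity. If $\varphi\in W^\vee$ satisfies $\varphi\circ q=0$, then $\varphi=0$ because $q$ is surjective. Injectivity of $q^\vee$ therefore follows from surjectivity of $q$ alone, and closedness of the image is not needed.

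Over a field, every injective linear map of vector spaces has a linear retraction. Extend a basis of $q^\vee(W^\vee)$ to a basis of $V^\vee$, and send the complementary basis vectors to $0$. This gives $r:V^\vee\to W^\vee$ with $r\circ q^\vee=\id_{W^\vee}$. Dualizing, $s=r^*:W\cong (W^\vee)^*\to (V^\vee)^*\cong V$ is continuous, since duals of linear maps are continuous for the pointwise topology. Moreover $q\circ s=(r\circ q^\vee)^*=\id_W$, using naturality of the biduality isomorphism $V\cong (V^\vee)^*$.

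The main obstacle is verifying that the biduality map $V\to (V^\vee)^*$ is a topological isomorphism for linearly compact $V$. This is the content of Lefschetz's theorem. I will cite it. If a self-contained argument is wanted, I would write $V=\varprojlim V/U$ over open subspaces $U$ of finite codimension, which holds because $V$ is complete and its topology is linear. This identifies $V^\vee=\varinjlim (V/U)^\vee$, and hence $(V^\vee)^*=\varprojlim V/U=V$.

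In the applications (for example $E_S=k^S$ and Lie algebras of prounipotent groups), all spaces are presented as inverse limits of finite-dimensional discrete spaces. The identification is then immediate.
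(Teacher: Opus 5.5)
Your proposal is correct and follows the paper's proof essentially verbatim. You dualize to the injection $q^\vee:W^\vee\hookrightarrow V^\vee$ of discrete spaces, choose a linear retraction $r$ by extending a basis, and take $s=r^*$ under the biduality $V\cong(V^\vee)^*$; your remarks on why $q^\vee$ is injective and on the naturality giving $qs=(rq^\vee)^*=\id_W$ spell out steps the paper leaves implicit.
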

\begin{proof}
Continuous duality identifies linearly compact spaces contravariantly with
discrete vector spaces. In particular $V\cong(V^\vee)^*$, where
$V^\vee=\Hom_{\rm cts}(V,k)$ and $(-)^*$ is the full dual with its
linearly compact topology. The map $q^\vee:W^\vee\hookrightarrow V^\vee$
is injective. Extend a basis of its image to a basis of $V^\vee$, and
choose a linear retraction $r:V^\vee\to W^\vee$.
Then $s=r^*:W\to V$ is continuous and $qs=\id_W$.
\end{proof}

For a vector space $V$, write $\mathbb L(V)$ for the ordinary free Lie
algebra on $V$.  For any Lie algebra $\mathfrak g$ its lower central series is
\[
 \gamma_1\mathfrak g=\mathfrak g,\qquad
 \gamma_{n+1}\mathfrak g=[\mathfrak g,\gamma_n\mathfrak g]
 \quad(n\geq1).
\]
Thus $\mathbb L(V)/\gamma_{c+1}\mathbb L(V)$ is the free nilpotent Lie
algebra of class at most $c$ on $V$.

For a linearly compact space $E$ define
\begin{equation}\label{eq:free-lie}
 \LL(E)=\varprojlim_{W,c}
 \mathbb L(E/W)/\gamma_{c+1}\mathbb L(E/W),
\end{equation}
where $W$ runs through open subspaces of finite codimension and $c\geq1$.
The inverse system is ordered by refinement: $(W',c')$ maps to $(W,c)$
whenever $W'\subseteq W$ and $c'\geq c$, via the evident quotient map.
Every displayed factor is a finite-dimensional nilpotent Lie algebra.
The canonical map $E\to\LL(E)$ is continuous, and its image
topologically generates $\LL(E)$.

\begin{lemma}\label{lem:universal}
For every pronilpotent Lie algebra $\mathfrak u$, restriction to $E$ gives
a bijection
\[
 \Hom_{\rm cts,Lie}(\LL(E),\mathfrak u)
 \cong \Hom_{\rm cts,lin}(E,\mathfrak u).
\]
\end{lemma}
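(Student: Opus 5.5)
We prove the bijection by reducing to finite-dimensional nilpotent targets and then passing to the limit. Write $\mathfrak u=\varprojlim_\alpha \mathfrak u_\alpha$ with each $\mathfrak u_\alpha$ a finite-dimensional nilpotent Lie algebra carrying the discrete topology. Both sides of the claimed bijection convert this limit into a limit of Hom-sets: a continuous map into $\mathfrak u$ is exactly a compatible family of continuous maps into the $\mathfrak u_\alpha$. Restriction along $E\to\LL(E)$ is natural in $\mathfrak u$. Hence it suffices to treat a single finite-dimensional nilpotent $\mathfrak u$ with discrete topology, say of class $\le c_0$.

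\emph{Injectivity.} Suppose two continuous Lie maps $\LL(E)\to\mathfrak u$ agree on the image of $E$. The set where they agree is a closed Lie subalgebra, because $\mathfrak u$ is Hausdorff and both maps are continuous Lie homomorphisms. The image of $E$ topologically generates $\LL(E)$, as noted before the lemma. Hence the two maps coincide. This argument works for any Hausdorff $\mathfrak u$, so it does not even need the reduction.

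\emph{Surjectivity.} Let $f:E\to\mathfrak u$ be continuous and linear, with $\mathfrak u$ discrete. Then $W=\ker f$ is an open subspace. It has finite codimension, because $E/W$ embeds in the finite-dimensional $\mathfrak u$ (alternatively, open subspaces of a linearly compact space have finite codimension). The map $f$ factors as $\bar f:E/W\to\mathfrak u$. By the universal property of the ordinary free Lie algebra, $\bar f$ extends to a Lie map $\mathbb L(E/W)\to\mathfrak u$. Since $\mathfrak u$ has class $\le c_0$, this extension kills $\gamma_{c_0+1}\mathbb L(E/W)$. It therefore induces a map on the factor indexed by $(W,c_0)$ in \eqref{eq:free-lie}. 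Composing with the projection from $\LL(E)$ to that factor gives a continuous Lie map $\LL(E)\to\mathfrak u$. Its restriction to $E$ is $E\to E/W\to\mathfrak u$, which is $f$.

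For a general pronilpotent $\mathfrak u$, we apply this construction to each $\mathfrak u_\alpha$. By the injectivity already proved, the resulting maps into the $\mathfrak u_\alpha$ are automatically compatible under the transition maps, so they assemble to the required map into $\mathfrak u$.

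The main point needing care is the definitional input about pronilpotent Lie algebras. We must use that each discrete quotient $\mathfrak u_\alpha$ is finite-dimensional and nilpotent; this is the equivalence with prounipotent groups recalled in Section~1. We must also check that a continuous linear $f$ has open kernel in each discrete quotient, which holds by continuity into a discrete space. Once these are in place, the argument is formal.
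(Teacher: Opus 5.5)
Your proof is correct and follows essentially the same route as the paper. Both arguments factor the map through an open finite-codimensional $W$, extend via the free nilpotent Lie algebra on $E/W$, assemble over the finite-dimensional nilpotent quotients of $\mathfrak u$, and obtain uniqueness from density of the Lie subalgebra generated by $E$. The only difference is organizational: you prove injectivity first and use it to get compatibility across the quotients for free, whereas the paper checks independence of the choice of $(W,c)$ and compatibility for $J'\subseteq J$ directly from uniqueness of free nilpotent extensions.
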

\begin{proof}
Let $a:E\to\mathfrak u$ be continuous and linear.  Since
$\mathfrak u$ is pronilpotent, we may write
\[
 \mathfrak u\cong\varprojlim_J \mathfrak u/J,
\]
where $J$ runs through the open ideals for which $\mathfrak u/J$ is
finite-dimensional and nilpotent.  Fix such a $J$, and write
$a_J:E\to\mathfrak u/J$ for the composite of $a$ with the quotient map.
Because $\mathfrak u/J$ is finite-dimensional and discrete, continuity of
$a_J$ implies that $W:=\ker(a_J)$ is open; moreover $E/W$ is
finite-dimensional.  More generally one may replace $W$ by any open
finite-codimensional subspace contained in $\ker(a_J)$.  Thus $a_J$
factors as a linear map
\[
 \bar a_{J,W}:E/W\longrightarrow\mathfrak u/J.
\]
Choose $c$ at least the nilpotency class of $\mathfrak u/J$.  By the
ordinary universal property of the free Lie algebra, $\bar a_{J,W}$
extends uniquely to a Lie homomorphism $\mathbb L(E/W)\to\mathfrak u/J$.
Since every $(c+1)$-fold iterated bracket vanishes in $\mathfrak u/J$,
this homomorphism kills $\gamma_{c+1}\mathbb L(E/W)$ and therefore factors
uniquely through
\[
 \phi_{J,W,c}:\mathbb L(E/W)/\gamma_{c+1}\mathbb L(E/W)
       \longrightarrow\mathfrak u/J.
\]
Composing with the canonical projection
$p_{W,c}:\LL(E)\to
\mathbb L(E/W)/\gamma_{c+1}\mathbb L(E/W)$ gives a continuous Lie map
$\Phi_J:=\phi_{J,W,c}p_{W,c}:\LL(E)\to\mathfrak u/J$.

We now verify that $\Phi_J$ is independent of the auxiliary choices.
Suppose that $(W_1,c_1)$ and $(W_2,c_2)$ are two admissible choices for the
same $J$.  Put
\[
 W_3=W_1\cap W_2,\qquad c_3=\max\{c_1,c_2\}.
\]
Then $(W_3,c_3)$ refines both pairs.  After passage to the common factor
$\mathbb L(E/W_3)/\gamma_{c_3+1}$, both maps to $\mathfrak u/J$ extend
the same linear map $E/W_3\to\mathfrak u/J$ induced by $a_J$.
The free nilpotent universal property therefore makes them equal.
Consequently the two composites from $\LL(E)$ coincide.  This proves
independence of both $W$ and $c$.

Next let $J'\subseteq J$.  Denote by
$\rho_{J'J}:\mathfrak u/J'\to\mathfrak u/J$ the quotient map.  Choose a
common refinement $W$ of the subspaces used for $J'$ and $J$, and choose
$c$ at least the nilpotency classes of both quotients.  On the finite free
nilpotent factor $\mathbb L(E/W)/\gamma_{c+1}$, the two maps
\[
 \rho_{J'J}\Phi_{J'}
 \quad\text{and}\quad
 \Phi_J
\]
are induced by the same linear map $a_J:E\to\mathfrak u/J$; hence, again
by uniqueness of the free nilpotent extension, they are equal.  Thus the
family $(\Phi_J)_J$ is compatible with all transition maps as $J$ varies.

The universal property of the inverse limit now gives a unique Lie map
\[
 \Phi:\LL(E)\longrightarrow\varprojlim_J\mathfrak u/J
       \cong\mathfrak u
\]
whose $J$-component is $\Phi_J$.  It is continuous because the topology on
$\mathfrak u$ is the inverse-limit topology and each $\Phi_J$ is
continuous.  By construction $\Phi|_E=a$, so restriction is surjective.

Finally, if $\Psi:\LL(E)\to\mathfrak u$ is another continuous Lie map
with $\Psi|_E=a$, then for every $J$ the maps to $\mathfrak u/J$ obtained
from $\Psi$ and $\Phi$ agree on $E$.  They therefore agree on the Lie
subalgebra generated by $E$, which is dense in $\LL(E)$; continuity and
the Hausdorffness of the finite-dimensional quotient $\mathfrak u/J$ imply
that they agree on all of $\LL(E)$.  Since the quotient maps
$\mathfrak u\to\mathfrak u/J$ separate points, $\Psi=\Phi$.
This proves injectivity and completes the proof.
\end{proof}

\begin{proposition}[Projective lifting]\label{prop:projective}
Let $q:U\twoheadrightarrow V$ be an epimorphism in $\C$.
Every morphism $f:F(S)\to V$ admits a morphism
$\widetilde f:F(S)\to U$ with $q\widetilde f=f$.
There is no restriction on the cardinality of $S$.
\end{proposition}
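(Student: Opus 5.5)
The proof splits according to the two meanings of $\C$. In both cases the plan is the same: reduce to a statement about the generators, then extend by the universal property of $F(S)$.

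\emph{Characteristic zero.} Pass to Lie algebras via the equivalence with pronilpotent Lie algebras. Then $q$ becomes a continuous surjective Lie map $\Lie(q):\mathfrak u\to\mathfrak v$, and $f$ becomes a continuous Lie map $\phi:\LL(E_S)\to\mathfrak v$. By Lemma~\ref{lem:universal}, $\phi$ is determined by its restriction $a=\phi|_{E_S}:E_S\to\mathfrak v$, which is continuous and linear. The underlying spaces $\mathfrak u,\mathfrak v$ are linearly compact, being inverse limits of finite-dimensional discrete spaces. Lemma~\ref{lem:linear} therefore gives a continuous linear section $\sigma:\mathfrak v\to\mathfrak u$ with $\Lie(q)\sigma=\id$. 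Put $\widetilde a=\sigma a:E_S\to\mathfrak u$; it is continuous and linear. Lemma~\ref{lem:universal} then extends $\widetilde a$ to a continuous Lie map $\widetilde\phi:\LL(E_S)\to\mathfrak u$.

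It remains to check $\Lie(q)\widetilde\phi=\phi$. Both sides are continuous Lie maps $\LL(E_S)\to\mathfrak v$, and both restrict to $a$ on $E_S$. By the uniqueness part of Lemma~\ref{lem:universal} they coincide. Exponentiating gives $\widetilde f$. The main obstacle is this step's setup: making sure the epimorphism of group schemes really gives a continuous \emph{surjection} of linearly compact Lie algebras. This is the paper's stated convention for epimorphisms. If more is needed, note that surjectivity of each finite-dimensional stage passes to the limit by Mittag-Leffler for linearly compact spaces. No cardinality condition on $S$ enters, because the section is chosen once, not generator by generator.

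\emph{Pro-$p$ case.} Here $f$ corresponds to a continuous pointed map $x:S^+\to V$, so $x_s\to1$ along the cofinite filter. The difficulty is that arbitrary preimages of the $x_s$ need not converge to $1$. I would use the classical fact that a continuous surjection of profinite spaces admits a continuous section. This can be taken pointed: choose the section on the pointed profinite space $V$ so that it sends $1$ to $1$, which can be arranged by composing with a translation, using a section $\tau$ of $q$ and replacing it by $\tau(v)\tau(1)^{-1}$. Then $\widetilde x=\tau x:S^+\to U$ is continuous and pointed, hence defines $\widetilde f:F_p(S)\to U$ by the universal property. Finally, $q\widetilde f$ and $f$ agree on the generators and are both continuous, so they agree on the dense subgroup generated by the generators, hence everywhere.

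Alternatively, one can invoke projectivity of free pro-$p$ groups on pointed profinite spaces directly (Ribes--Zalesskii). That is the classical input the abstract refers to.
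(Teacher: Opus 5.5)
Your characteristic-zero argument is the paper's own: a continuous linear, non-Lie section of $\Lie(q)$ from Lemma~\ref{lem:linear} is composed with the restriction to $E$ and extended by Lemma~\ref{lem:universal}, and uniqueness in that lemma gives $\Lie(q)\widetilde\phi=\phi$.

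\textbf{The pro-$p$ step as written.} The fact you invoke is false: a continuous surjection of profinite spaces need not have a continuous section. Take $Y=\{0\}\cup\{1/n:n\geq1\}$ and $X=\{(1/n,(-1)^n):n\geq1\}\cup\{(0,1),(0,-1)\}\subset\mathbb R^2$. The first projection $X\to Y$ is a continuous surjection of profinite spaces. But any section is forced on the isolated points to be $1/n\mapsto(1/n,(-1)^n)$, and this does not converge.

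\textbf{The repair.} What your argument actually needs is the group-theoretic statement. Since $U$ is compact and $V$ is Hausdorff, $q$ is a quotient map, so $V\cong U/\ker q$. For a closed subgroup $K$ of a profinite group $U$, the projection $U\to U/K$ admits a continuous section, and it may even be chosen to send the trivial coset to $1$ (Ribes--Zalesskii, Section~2.2; Serre, \emph{Galois Cohomology}, I.1.2). If you cite this instead, your normalization $\tau(v)\tau(1)^{-1}$ and the rest of your argument go through; the normalization is then not even needed.

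\textbf{Comparison with the paper.} Once repaired, your pro-$p$ route is genuinely different from the paper's. The paper reduces to finite embedding problems. There convergence of the generators makes lifting trivial, since only finitely many generators are nontrivial in a finite quotient. It then invokes the criterion that solving all finite embedding problems gives lifting along arbitrary epimorphisms. You instead choose one continuous, non-homomorphic section of $q$ and let its continuity guarantee that the lifted family converges to $1$. This makes the pro-$p$ case formally identical to the characteristic-zero case: a set-theoretic (respectively linear) section chosen once, followed by the universal property of $F(S)$.

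Neither route is more elementary. Each hides a compactness/Zorn argument in its classical input: the existence of sections for profinite quotients in yours, the embedding-problem criterion in the paper's. Yours has the advantage of treating the two categories uniformly.
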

\begin{proof}
In characteristic zero the assertion holds more generally with
$F(S)$ replaced by $\exp\LL(E)$ for any linearly compact $E$.
Let $q_*:\mathfrak u\twoheadrightarrow\mathfrak v$ and
$f_*:\LL(E)\to\mathfrak v$ be the corresponding Lie morphisms.
Choose the continuous linear section $s$ of $q_*$ furnished by
Lemma~\ref{lem:linear}. The continuous linear map
$s\circ f_*|_E$ extends by Lemma~\ref{lem:universal} to a continuous
Lie morphism $\widetilde f_*:\LL(E)\to\mathfrak u$.
The maps $q_*\widetilde f_*$ and $f_*$ agree on $E$, and hence agree
everywhere. Exponentiation proves the assertion. The section $s$ is
not required to preserve Lie brackets.

In the pro-$p$ category this is the classical projectivity theorem for
free pro-$p$ groups, valid in arbitrary rank
\cite[Section~7.7]{RibesZalesskii}. We recall how its finite lifting
criterion applies here. For a diagram $F_p(S)\to B\leftarrow A$ with
$A\twoheadrightarrow B$ a surjection of finite $p$-groups, the images
of all but finitely many generators in $B$ are $1$. Lift the finitely
many nonidentity images and send every remaining generator to $1$.
The resulting assignment is convergent and extends to a continuous
solution. The standard projectivity criterion for pro-$p$ groups says
that solving all such finite embedding problems is equivalent to lifting
along arbitrary epimorphisms of pro-$p$ groups
\cite[Section~7.6]{RibesZalesskii}. This yields the stated lifting.
\end{proof}

\begin{corollary}[Relative lifting]\label{cor:relative}
Suppose $B=A\amalg F(S)$ in $\C$, $q:U\twoheadrightarrow V$ is an
epimorphism, and $a:A\to U$, $f:B\to V$ satisfy $qa=f|_A$.
Then $f$ has a lift $\widetilde f:B\to U$ with $\widetilde f|_A=a$.
\end{corollary}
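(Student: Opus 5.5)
The plan is to use the universal property of the coproduct $B=A\amalg F(S)$ in $\C$. A morphism out of $B$ is the same as a pair of morphisms, one out of $A$ and one out of $F(S)$, with no compatibility required between them. So it suffices to produce a morphism $g:F(S)\to U$ with $qg=f|_{F(S)}$, where $f|_{F(S)}$ denotes the composite of $f$ with the canonical coprojection $F(S)\to B$. The lift on $A$ is prescribed to be $a$. Then $\widetilde f$ is defined as the unique morphism $B\to U$ restricting to $a$ on $A$ and to $g$ on $F(S)$.

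The key step is the construction of $g$. This is exactly Proposition~\ref{prop:projective}, applied to the epimorphism $q:U\twoheadrightarrow V$ and the morphism $f|_{F(S)}:F(S)\to V$. No continuity or convergence issue arises at this point, because the proposition already handles arbitrary cardinality of $S$. In the prounipotent case this rests on the continuous linear section of Lemma~\ref{lem:linear} and the universal property of Lemma~\ref{lem:universal}. In the pro-$p$ case it rests on projectivity of free pro-$p$ groups.

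It remains to check that $q\widetilde f=f$. Both sides are morphisms $B\to V$, so by the uniqueness part of the coproduct universal property it suffices to compare them on the two summands. On $A$ we have $q\widetilde f|_A=qa=f|_A$ by hypothesis. On $F(S)$ we have $q\widetilde f|_{F(S)}=qg=f|_{F(S)}$ by construction. Hence $q\widetilde f=f$, and $\widetilde f|_A=a$ holds by definition.

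The only point needing care, and the one I expect to be the main obstacle, is that the coproduct is taken in $\C$ and not as an abstract free product of groups. The universal property must therefore be used for morphisms in $\C$: continuous homomorphisms of pro-$p$ groups, or morphisms of prounipotent group schemes. Since $U$ and $V$ are objects of $\C$ and all maps involved are morphisms of $\C$, this is exactly the universal property available, and no further topological argument is needed.
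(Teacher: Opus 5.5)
Your proposal is correct and is exactly the paper's argument: lift $f|_{F(S)}$ by Proposition~\ref{prop:projective}, then glue that lift with $a$ using the coproduct universal property in $\C$. Your check of $q\widetilde f=f$ on the two summands, via the uniqueness clause of the coproduct, is the verification the paper leaves implicit.
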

\begin{proof}
Lift $f|_{F(S)}$ by Proposition~\ref{prop:projective} and combine this
lift with $a$ using the coproduct universal property.
\end{proof}

\section{Free simplicial objects and relative extension}

An augmentation $X_\bullet\to G$ means an augmentation to
$\Const(G)$. It is useful to spell out the matching objects, since they
are the objects against which the inductive lifting is performed. Put
$Y_{-1}=H$ and regard an augmented simplicial object as a functor on the
augmented simplex category. For $n\geq0$ define the \emph{full matching
object}
\[
 M_nY=\varprojlim_{\substack{\delta:[m]\hookrightarrow[n]\\-1\leq m<n}}Y_m,
\]
where the limit is taken in $\C$ over all proper injective maps into
$[n]$. Thus $M_nY$ is the object of all proper faces of a formal
$n$--simplex which agree on their common subfaces. The natural matching
map
\[
 b_n:Y_n\longrightarrow M_nY
\]
is induced by the face maps (and, in degree zero, by the augmentation).
In particular
\[
 M_0Y=H,\qquad M_1Y=Y_0\times_HY_0,
\]
and, for $n\geq1$, the codimension-one faces determine the whole
matching object, giving the concrete description
\[
 M_nY=\{(y_0,\ldots,y_n)\in Y_{n-1}^{n+1}:
              d_i y_j=d_{j-1}y_i\text{ for }i<j\}.
\]
For $n=1$ the single compatibility equation means that the two elements
of $Y_0$ have the same image in $H$. For example,
\[
 M_2Y=\{(y_0,y_1,y_2)\in Y_1^3:
 d_0y_1=d_0y_0,\ d_0y_2=d_1y_0,\ d_1y_2=d_1y_1\}.
\]
Under the tuple description, $b_n=(d_0,\ldots,d_n)$. Thus surjectivity
of $b_n$ says precisely that every compatible \emph{full boundary}
admits an $n$--simplex filling it; this is stronger data than a single
horn-filling statement.

For compatibility with Keune's notation, put
\[
 Z_{-1}Y=H,\qquad Z_nY=M_{n+1}Y\quad(n\geq0),
\]
and write
\[
 d^{(-1)}=b_0:Y_0\longrightarrow H,\qquad
 d^{(n)}=b_{n+1}:Y_{n+1}\longrightarrow Z_nY\quad(n\geq0).
\]
Thus $Z_nY$ is the \emph{simplicial kernel}: it consists of compatible
full boundaries in degree $n+1$. We call $Y_\bullet\to H$ a
\emph{resolution} when every $d^{(n)}$, $n\geq-1$, is an epimorphism.

\begin{proposition}[Keune: simplicial kernels and homotopy]
\label{prop:keune-simplicial-kernel}
Let $Y_\bullet\to H$ be an augmented simplicial group. For every
$n\geq0$ there is a natural bijection of pointed sets
\[
 \widetilde\gamma_n:
 \pi_n(Y_\bullet\to H)\xrightarrow{\ \cong\ }
 Z_nY/\operatorname{im}d^{(n)}.
 \tag{SK}
\]
To specify both sides and their group structures, put
\[
 \begin{aligned}
 N_mY&=\bigcap_{i=0}^{m-1}\ker d_i &&(m\geq1),\\
 C_nY&=\bigcap_{i=0}^{n}\ker d_i &&(n\geq1),\\
 C_0Y&=\ker(Y_0\to H).
 \end{aligned}
\]
and write
\[
 B_nY=d_{n+1}(N_{n+1}Y),\qquad
 D_nY=\operatorname{im}d^{(n)}.
\]
Thus $\pi_n(Y_\bullet\to H)=C_nY/B_nY$ is the homotopy group
of the augmented Moore complex. The map in \textup{(SK)} is induced by
\[
 \gamma_n(c)=(1,\ldots,1,c)\in Z_nY,\qquad c\in C_nY.
\]
The quotient on the right of \textup{(SK)} means the set of left cosets
$zD_nY$, with distinguished element $D_nY$. It is equipped with the
group structure transported from $C_nY/B_nY$ along
$\widetilde\gamma_n$. Explicitly, every coset has a representative
$\gamma_n(c)$, and the multiplication is
\[
 \bigl(\gamma_n(c)D_nY\bigr)\star
 \bigl(\gamma_n(c')D_nY\bigr)
 =\gamma_n(cc')D_nY.
\]
With this multiplication, \textup{(SK)} is a natural isomorphism of
groups. No normality of $D_nY$ in $Z_nY$ is assumed, and the coset
projection from $Z_nY$ is not asserted to be a group homomorphism.
In the pro-$p$ and prounipotent cases, images are closed; the topology,
respectively the prounipotent group structure, on the right of
\textup{(SK)} is transported from the Moore quotient.
Consequently, $d^{(n)}$ is surjective exactly when the degree-$n$
augmented homotopy obstruction vanishes.
\end{proposition}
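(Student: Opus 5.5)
We show that $\gamma_n$ lands in $Z_nY$, that it induces a well-defined injective map on $C_nY/B_nY$, and that this map hits every coset $zD_nY$. The argument is the standard simplicial-group manipulation behind Keune's theorem.

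\emph{$\gamma_n$ lands in $Z_nY$.} Using the tuple description of $M_{n+1}Y$, we check the compatibility equations $d_iy_j=d_{j-1}y_i$ for $i<j$ on $(1,\ldots,1,c)$. The only nontrivial ones have $j=n+1$, and they read $d_ic=1$ for $i\le n$. This holds because $c\in C_nY$. For $n=0$ the condition is that $(1,c)$ has both entries mapping to $1\in H$, which is exactly $c\in C_0Y$.

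\emph{Well-defined and injective.} We show $\gamma_n(c)\in D_nY$ if and only if $c\in B_nY$.
\begin{itemize}
\item If $c=d_{n+1}x$ with $x\in N_{n+1}Y$, then $d^{(n)}x=(1,\ldots,1,c)$.
\item Conversely, if $d^{(n)}x=\gamma_n(c)$, then $x\in N_{n+1}Y$ and $c=d_{n+1}x\in B_nY$.
\end{itemize}
For two elements $c,c'\in C_nY$, note that $\gamma_n$ is a group homomorphism $C_nY\to Z_nY$, since the products are componentwise. Hence $\gamma_n(c)D_nY=\gamma_n(c')D_nY$ iff $\gamma_n(c^{-1}c')\in D_nY$. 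This happens iff $c^{-1}c'\in B_nY$, i.e.\ iff $cB_nY=c'B_nY$. So $\widetilde\gamma_n$ is well defined and injective on left cosets, and no normality of $D_nY$ is needed. Since $B_nY$ is normal in $C_nY$ (standard), the quotient group $C_nY/B_nY$ makes sense. The displayed $\star$ is then by definition the transported structure, so the group statement is automatic once bijectivity is shown.

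\emph{Surjectivity, the main step.} Given a compatible boundary $z=(y_0,\ldots,y_{n+1})\in Z_nY$, we find $x\in Y_{n+1}$ with $d^{(n)}(x)^{-1}z=(1,\ldots,1,c)$. We do this by successively killing the entries $y_0,\ldots,y_n$, using degeneracies and the usual filler construction for simplicial groups. Set $x_0=s_0y_0$; then $d_0x_0=y_0$, and $z\cdot d^{(n)}(x_0)^{-1}$ has first entry $1$. Inductively, suppose the first $k$ entries are $1$. Replace the current tuple by its product with $d^{(n)}(s_ky_k')^{-1}$, where $y_k'$ is the current $k$-th entry. The compatibilities $d_iy_k'=d_{k-1}y_i'=1$ for $i<k$ make the entries $i<k$ stay $1$, by the simplicial identities $d_is_k=s_{k-1}d_i$. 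Meanwhile $d_ks_k=\id$ kills entry $k$. After $n+1$ steps the tuple has the form $(1,\ldots,1,c)$. The compatibility equations then force $d_ic=1$ for $i\le n$, so $c\in C_nY$. For $n=0$ the last entry lies in $C_0Y$ because both entries of $z$ have equal image in $H$. Since we multiplied by elements of $D_nY$ on the right, $z\in\gamma_n(c)D_nY$ up to left/right bookkeeping. To get left cosets exactly, run the same procedure multiplying on the right, or apply inverses. The bookkeeping of the simplicial identities in this induction is the step requiring care.

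\emph{Naturality, topology, and the final consequence.} Naturality is clear since all constructions commute with morphisms of augmented simplicial objects. In $\C$, $D_nY$ is the image of a morphism of compact objects, hence closed, and the transported structure is then by definition. Finally, $\pi_n=0$ iff every coset equals $D_nY$, i.e.\ iff $D_nY=Z_nY$, which is surjectivity of $d^{(n)}$.
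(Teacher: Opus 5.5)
Your proof is correct and follows the paper's argument. Your two facts, that $\gamma_n(c)\in D_nY$ if and only if $c\in B_nY$, and that every $z\in Z_nY$ lies in some $\gamma_n(c)D_nY$, are exactly the paper's identities $\gamma_n(C_nY)\cap D_nY=\gamma_n(B_nY)$ and $Z_nY=\gamma_n(C_nY)D_nY$; your entry-killing by $d^{(n)}(s_ky_k')^{-1}$ is the paper's horn-filling recursion $x^{(k+1)}=s_k\bigl(z_k(d_kx^{(k)})^{-1}\bigr)x^{(k)}$ written on boundaries, with $y_k'=z_k(d_kx^{(k)})^{-1}$. The closing left/right caveat is unnecessary: you obtain $z\,d=\gamma_n(c)$ with $d\in D_nY$, and this already gives $zD_nY=\gamma_n(c)D_nY$.
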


\begin{proof}
For the Moore-complex description of homotopy groups, see
Inassaridze \cite[Chapter~2, pp.~35--36, Definition~2.2]{Inassaridze}.
The full-boundary bijection is given in
\cite[Chapter~2, Proposition~2.14, pp.~41--42]{Inassaridze}, more
generally for augmented pseudosimplicial groups. That proposition
explicitly describes the right-hand side as a quotient set.
We give the passage to full-boundary classes explicitly, using Keune's
simplicial-kernel notation \cite{Keune}, to specify the meaning of the
quotient in \textup{(SK)}.

The simplicial identities give $B_nY\subseteq C_nY$.
Moreover, $N_{n+1}Y$ is normal in $Y_{n+1}$, and $d_{n+1}$ is
surjective because $d_{n+1}s_n=\id$. Hence $B_nY$ is normal in
$Y_n$, and in particular in $C_nY$.
We claim that
\begin{equation}\label{eq:coset-comparison}
 \begin{aligned}
 \gamma_n(C_nY)\cap D_nY&=\gamma_n(B_nY),\\
 Z_nY&=\gamma_n(C_nY)D_nY.
 \end{aligned}
\end{equation}
Indeed, $\gamma_n(c)=d^{(n)}x$ means precisely that
$d_0x=\cdots=d_nx=1$ and $d_{n+1}x=c$. This is equivalent to
$x\in N_{n+1}Y$ with Moore boundary $c$, proving the first equality.

For the second equality, let $z=(z_0,\ldots,z_{n+1})\in Z_nY$.
Fill the horn consisting of its first $n+1$ faces: there is an
$x\in Y_{n+1}$ with $d_ix=z_i$ for $0\leq i\leq n$.
For completeness, such an $x$ is obtained by the finite recursion
\[
 \begin{aligned}
 x^{(0)}&=1,\qquad x=x^{(n+1)},\\
 x^{(i+1)}&=s_i\bigl(z_i(d_ix^{(i)})^{-1}\bigr)x^{(i)}
 \quad(0\leq i\leq n).
 \end{aligned}
\]
The matching equations ensure that each step preserves the earlier
faces. Put $c=z_{n+1}(d_{n+1}x)^{-1}$. The same equations imply
$c\in C_nY$, including the augmentation condition when $n=0$.
Thus
\[
 z\bigl(d^{(n)}x\bigr)^{-1}=\gamma_n(c),
\]
which proves the second equality in \eqref{eq:coset-comparison}.

It follows that $cB_nY\mapsto\gamma_n(c)D_nY$ is a well-defined
bijection. Since $B_nY$ is normal in $C_nY$, the displayed
multiplication $\star$ is independent of representatives and makes
this bijection a group isomorphism. The construction commutes with
morphisms of augmented simplicial groups, so the isomorphism is
natural. In particular the coset set is a singleton exactly when
$D_nY=Z_nY$, giving the asserted surjectivity criterion.

All operations in the horn-filling recursion are continuous in the
pro-$p$ case and are morphisms of underlying schemes in the
prounipotent case. Images are closed by compactness for pro-$p$
groups, and by closedness of continuous linear images of linearly
compact spaces after passing to Lie algebras in the prounipotent
case. Transporting the structure from the Moore quotient gives the
stated interpretation in both categories. For pro-$p$ groups the
transported topology also agrees with the coset topology, since the
induced continuous bijection is between compact Hausdorff spaces.
\end{proof}

Thus the resolving condition is not merely terminology: it says that
every class in the simplicial kernel is an actual full boundary. By
Proposition~\ref{prop:keune-simplicial-kernel}, for a general augmented
simplicial group the obstruction to this assertion is precisely a
homotopy class.

For $0\leq m\leq n$ write
\[
 D(n,m)=\operatorname{Sur}_{\Delta}([n],[m])
\]
for the set of actual order-preserving surjections in the simplex
category. This notation is important: an element of $D(n,m)$ is a
morphism of $\Delta$, not a formal word in elementary degeneracy
operators. Hence different degeneracy words which are identified by the
simplicial identities represent one and the same element of $D(n,m)$.
For example, the two words in
\[
 s_i s_j=s_{j+1}s_i\qquad(i\leq j)
\]
do not label two generators. Equivalently, one may start with formal
strings of degeneracies and quotient by the degeneracy identities; the
resulting equivalence classes are canonically parametrized by the
surjections in $D(n,m)$.

\begin{definition}\label{def:free}
A \emph{based free simplicial object} is a simplicial object together
with sets $S_m$ of nondegenerate free generators and specified
isomorphisms
\begin{equation}\label{eq:basis}
 X_n\cong F(B_n),\qquad
 B_n=\coprod_{0\leq m\leq n}\ \coprod_{\sigma\in D(n,m)}S_m^{\sigma}.
\end{equation}
Here $S_m^{\sigma}$ is one copy of $S_m$ for the actual surjection
$\sigma$; no additional copy is introduced for another word of
degeneracies representing the same $\sigma$. If
$\alpha_i:[n+1]\twoheadrightarrow[n]$ is the elementary surjection
corresponding to $s_i$, then
\[
 s_i(e,\sigma)=(e,\sigma\alpha_i).
\]
Thus the simplicial degeneracy identities hold on the basis literally,
because the corresponding composites are equal morphisms in $\Delta$.
All face and degeneracy maps are morphisms in $\C$. The object is of
\emph{finite type} if each $S_m$ is finite.
\end{definition}

This is the usual unique-degeneracy, or Eilenberg--Zilber, convention:
every basis element of $X_n$ is either a new element of $S_n^{\id}$ or
the unique degeneracy, indexed by an actual surjection $\sigma$, of a
nondegenerate generator in a lower degree. Thus ``free simplicial'' is
stronger than requiring each $X_n$ merely to be free. In each degree
\eqref{eq:partition} makes the new and degenerate families free factors.

\paragraph{Why a relative extension is used.}
Here the adjective \emph{relative} does not mean a group extension in the
short-exact-sequence sense.  It means that a simplicial subobject
$A_\bullet$ has already been constructed and is to be kept fixed while one
passes to a larger simplicial object $B_\bullet$ by freely adjoining new
nondegenerate generators, degree by degree, with prescribed compatible
full boundaries.  Thus the construction is ``relative to $A_\bullet$''.

This relative formulation is essential for the comparison argument rather
than merely convenient terminology.  In Lemma~\ref{lem:extension} a map
$a:A_\bullet\to Y_\bullet$ has already been prescribed and the required
lift on $B_\bullet$ must extend \emph{that same map}, not replace it by a
new choice.  Projectivity lifts the newly adjoined free factor, while the
relative coproduct decomposition combines that lift with the fixed map on
$A_\bullet$.  The same point is indispensable for homotopy uniqueness:
for the cylinder one starts with the two already prescribed endpoint maps
$f_0,f_1$ on $X_\bullet\amalg X_\bullet$ and extends them across
$C(X)_\bullet$.  Absolute degreewise freeness alone would not encode the
requirement that these previously constructed maps remain unchanged.  In
the topological categories considered here the relative formulation also
packages the infinitely many new generators into a single morphism in
$\C$, which is what preserves continuity.

We use exactly the same convention relatively. A \emph{relative free
extension} $A_\bullet\to B_\bullet$ is equipped with sets $T_m$ of new
nondegenerate relative generators and decompositions
\[
 B_n\cong A_n\amalg
 F\left(\coprod_{m\leq n}\ \coprod_{\sigma\in D(n,m)}T_m^\sigma\right).
\]
Again the index is the actual surjection $\sigma$, so generators related
by simplicial degeneracy identities are already identified and are not
adjoined independently. The faces of a new generator $t\in T_n$ lie in
the previously constructed degree $B_{n-1}$ and must be compatible:
\[
 (d_0t,\ldots,d_nt)\in M_nB.
\]
Equivalently, the attaching data in degree $n$ form a morphism from the
new free factor to the appropriate matching object of the lower
skeleton. Thus adjoining $T_n$ means adjoining genuinely new
nondegenerate generators together with one compatible full boundary for
each of them; all their degeneracies are then forced by the simplicial
identities.

\begin{lemma}[Simplicial extension]\label{lem:extension}
Let $A_\bullet\to B_\bullet$ be a relative free extension, with an
augmentation $B_\bullet\to G$. Let $Y_\bullet\to H$ be a resolution
and $\alpha:G\to H$ a morphism. Every simplicial morphism
$a:A_\bullet\to Y_\bullet$ over $\alpha$ extends to a simplicial
morphism $B_\bullet\to Y_\bullet$ over $\alpha$.
\end{lemma}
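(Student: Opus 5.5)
We construct the extension $\phi:B_\bullet\to Y_\bullet$ by induction on the simplicial degree $n$. We keep $\phi_n|_{A_n}=a_n$ throughout and require compatibility with all faces and degeneracies already defined. The induction is augmented: degree $-1$ is the map $\alpha:G\to H$. Suppose $\phi_m$ has been constructed for all $m<n$, forming a simplicial morphism between the $(n-1)$-truncations over $\alpha$. In degree $n$ the decomposition
\[
 B_n\cong A_n\amalg F\Bigl(\coprod_{m\le n}\coprod_{\sigma\in D(n,m)}T_m^\sigma\Bigr)
\]
and \eqref{eq:partition} split the new free factor as $F(T_n^{\id})\amalg F(\text{degenerate part})$.

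\textbf{Degenerate generators.} On a generator $(t,\sigma)$ with $m<n$ and $\sigma\in D(n,m)$, the value is forced: write $\sigma$ as a composite of elementary surjections, $\sigma^*=s_{i_1}\cdots s_{i_k}$, and set $\phi_n(t,\sigma)=\sigma^*\phi_m(t)$ in $Y_n$. This is well defined because $D(n,m)$ consists of actual morphisms of $\Delta$, so $\sigma^*$ on $Y$ does not depend on the chosen word.

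Convergence must be checked. In the pro-$p$ case, for each fixed $\sigma$ the family $\phi_m(T_m)$ converges to $1$, since $\phi_m$ is continuous on $F(T_m^{\id})$, and $\sigma^*$ is continuous. There are only finitely many $\sigma$ and $m$, so \eqref{eq:partition} yields a morphism in $\C$. In the prounipotent case this morphism is the composite of $\phi_m|_{F(T_m^{\id})}$ with the isomorphism $F(T_m^\sigma)\cong F(T_m^{\id})$ and then $\sigma^*$. Either way one gets a morphism $\psi:A_n\amalg F(\text{degenerate part})\to Y_n$ extending $a_n$.

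\textbf{Nondegenerate generators.} The new generators $T_n^{\id}$ come with attaching data $\partial:F(T_n^{\id})\to M_nB$, given by their compatible full boundaries. Composing with the map $M_n\phi$ induced by the lower truncation, which includes $\alpha$ at level $-1$, gives
\[
 f=M_n\phi\circ\partial:F(T_n^{\id})\to M_nY.
\]
Since $Y_\bullet\to H$ is a resolution, $b_n:Y_n\to M_nY$ is an epimorphism. Proposition~\ref{prop:projective} then lifts $f$ to $\widetilde f:F(T_n^{\id})\to Y_n$ with $b_n\widetilde f=f$. Combining $\psi$ and $\widetilde f$ through the coproduct gives $\phi_n:B_n\to Y_n$, which is Corollary~\ref{cor:relative} applied with $A=A_n\amalg F(\text{degenerate part})$. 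This single step is where the infinitely many generators are lifted continuously.

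\textbf{Simplicial identities.} It suffices to check them on topological generators, because both sides of each identity are morphisms in $\C$ that agree on a convergent generating family. On $A_n$ they hold because $a$ is simplicial. On new nondegenerate $t$, the identity $d_i\phi_n(t)=\phi_{n-1}(d_it)$ is precisely $b_n\widetilde f=f$; for $n=0$ this is the augmentation condition over $\alpha$. Degeneracies $s_i:B_{n-1}\to B_n$ agree because $s_i(e,\sigma)=(e,\sigma\alpha_i)$ and composition of surjections is functorial.

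The main obstacle is the faces of degenerate generators, $d_j\phi_n(t,\sigma)=\phi_{n-1}(d_j(t,\sigma))$. Here one uses the standard epi–mono factorization in $\Delta$: $\sigma\circ\delta_j$ factors as $\delta'\circ\sigma'$ with $\sigma'$ surjective and $\delta'$ injective. Both sides then reduce to $\sigma'^*\delta'^*\phi_m(t)$, using the inductive hypothesis that $\phi$ commutes with all structure maps in degrees below $n$ and the same identity in $B$ and in $Y$.
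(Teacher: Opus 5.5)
Your proposal is correct and follows essentially the same route as the paper. You induct on degree starting from $\alpha$ in degree $-1$, force the values on degenerate generators by $Y(\sigma)$ (well defined because $\sigma$ is an actual surjection), lift the new nondegenerate factor against the epimorphism $b_n:Y_n\to M_nY$ by projectivity, and check faces of degenerate generators through the epi--mono factorization of $\sigma\delta_j$. The only difference is one of order: the paper verifies the boundary condition on $A_n\amalg F(\text{degenerate part})$ before invoking Corollary~\ref{cor:relative}, whereas you combine the lift through the coproduct first and check those face identities afterwards, which is equally valid.
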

\begin{proof}
We give the induction in full.  Regard the augmentations as degree
$-1$ and put
\[
 u_{-1}=\alpha:G\longrightarrow H.
\]
We shall construct, for every $n\geq0$, maps
\[
 u_j:B_j\longrightarrow Y_j\qquad(-1\leq j\leq n)
\]
which form a morphism of augmented simplicial objects through degree
$n$, agree with $a_j$ on $A_j$, and are compatible with all face and
degeneracy operators whose source and target have degree at most $n$.

\emph{Base of the induction: $n=0$.}
In degree zero the relative free decomposition is
\[
 B_0\cong A_0\amalg F(T_0).
\]
Let $\varepsilon_B:B_0\to G$ be the augmentation and set
\[
 c_0=\alpha\varepsilon_B:B_0\longrightarrow H=M_0Y.
\]
Since the given map $a:A_\bullet\to Y_\bullet$ is over $\alpha$,
\[
 b_0a_0=c_0|_{A_0},
\]
where $b_0:Y_0\twoheadrightarrow H$ is the degree-zero matching map.
Apply Corollary~\ref{cor:relative} to
\[
 A_0\longrightarrow B_0=A_0\amalg F(T_0),
 \qquad b_0:Y_0\twoheadrightarrow H,
\]
with prescribed lift $a_0$ on $A_0$.  We obtain a morphism
\[
 u_0:B_0\longrightarrow Y_0
\]
extending $a_0$ and satisfying
\[
 b_0u_0=\alpha\varepsilon_B.
\]
Thus $u_{-1}=\alpha$ and $u_0$ form a morphism of augmented objects in
degrees $-1$ and $0$.  This is the induction base; no choice of
individual generators is being made.

\emph{Induction step.}
Let $n\geq1$ and suppose that
$u_{-1},u_0,\ldots,u_{n-1}$ have already been constructed and form a
morphism of the $(n-1)$--truncations.  Put
\[
 D_n=
 \coprod_{m<n}\ \coprod_{\sigma\in D(n,m)}T_m^\sigma,
 \qquad
 P_n=A_n\amalg F(D_n).
\]
Since the only element of $D(n,n)$ is the identity, the relative free
decomposition in degree $n$ can be rewritten as
\[
 B_n\cong P_n\amalg F(T_n).                 \tag{*}
\]
The factor $P_n$ is the part whose image is already forced by the lower
truncation.  Define
\[
 v_n:P_n\longrightarrow Y_n
\]
by $v_n|_{A_n}=a_n$ and, for $t\in T_m$, $m<n$, by
\[
 v_n(t^\sigma)=Y(\sigma)u_m(t),
 \qquad \sigma\in D(n,m).                  \tag{**}
\]
This is well defined because $\sigma$ is an actual surjection in
$\Delta$, rather than a chosen word in elementary degeneracies.  For a
fixed $n$ there are only finitely many possible surjections
$[n]\twoheadrightarrow[m]$.  The assignments in (**) are continuous on
each generator family, and the coproduct universal property together
with \eqref{eq:partition} therefore gives a morphism $v_n$ in $\C$.

We next verify explicitly that $v_n$ has exactly the boundary prescribed
by the already constructed lower-dimensional map.  It is enough to
check the codimension-one faces.  On $A_n$ this is true because $a$ is
simplicial.  Let $t\in T_m$, $m<n$, and let
$\sigma:[n]\twoheadrightarrow[m]$.  For the injection
$\delta_i:[n-1]\hookrightarrow[n]$ factor the monotone map
\[
 \sigma\delta_i:[n-1]\longrightarrow[m]
\]
uniquely as an epimorphism followed by a monomorphism,
\[
 \sigma\delta_i=\iota\rho,
 \qquad
 \rho:[n-1]\twoheadrightarrow[q],\quad
 \iota:[q]\hookrightarrow[m].             \tag{***}
\]
By the simplicial structure of the relative free extension,
\[
 d_i(t^\sigma)=B(\rho)B(\iota)t.
\]
Here $q\leq m<n$, so the induction hypothesis applies to every term in
this formula.  Consequently
\begin{align*}
 u_{n-1}d_i(t^\sigma)
 &=u_{n-1}B(\rho)B(\iota)t\\
 &=Y(\rho)u_qB(\iota)t\\
 &=Y(\rho)Y(\iota)u_m(t)\\
 &=Y(\sigma\delta_i)u_m(t)\\
 &=d_iY(\sigma)u_m(t)
  =d_iv_n(t^\sigma).
\end{align*}
If $q=m$, then $\iota=\id$ and the third equality is tautological; if
$q<m$, it is precisely the face compatibility already contained in the
induction hypothesis.  Thus all faces of $v_n$ agree with the maps
$u_j$ already constructed.  The same formula with an elementary
surjection in place of $\delta_i$ shows directly that
\[
 v_ns_i=s_iu_{n-1}
\]
on degree $n-1$.  Hence all degeneracy identities entering degree $n$
are also already satisfied on $P_n$.

Let
\[
 b_n^B:B_n\longrightarrow M_nB
\]
be the full matching map of the augmented simplicial object $B$.  Since
the maps $u_{-1},\ldots,u_{n-1}$ form a morphism of truncated augmented
simplicial objects, they induce a morphism of matching objects
\[
 M_n(u):M_nB\longrightarrow M_nY.
\]
Set
\[
 c_n=M_n(u)b_n^B:B_n\longrightarrow M_nY.
\]
The preceding boundary calculation says exactly that
\[
 b_n^Yv_n=c_n|_{P_n}.                       \tag{****}
\]
For $n=1$, condition (****) includes the equality of the two
augmentations in $H$; for $n>1$ the usual matching equations
$d_id_j=d_{j-1}d_i$ are already part of the induction hypothesis.

At this point the resolving hypothesis on $Y$ enters essentially.
For a new nondegenerate generator $t\in T_n$, the element
\[
 c_n(t)\in M_nY=Z_{n-1}Y
\]
is its already prescribed compatible full boundary. If $Y_\bullet\to H$
were merely an augmented simplicial group, there would be no reason for
$c_n(t)$ to have a filler in $Y_n$. In fact,
Proposition~\ref{prop:keune-simplicial-kernel} identifies the obstruction
to filling this boundary with the class
\[
 [c_n(t)]\in
 Z_{n-1}Y/\operatorname{im}d^{(n-1)}
 \cong \pi_{n-1}(Y_\bullet\to H)
 \qquad(n\geq1).
\]
Thus the simplicial kernel is exactly where the homotopy obstruction to
the inductive extension lives. For $n=0$ the analogous obstruction is
removed by the epimorphism $b_0:Y_0\twoheadrightarrow H$.

Because $Y_\bullet\to H$ is a resolution, all these obstructions
vanish simultaneously:
\[
 b_n^Y=d^{(n-1)}:Y_n\twoheadrightarrow
 M_nY=Z_{n-1}Y
\]
is an epimorphism.  We may therefore apply
Corollary~\ref{cor:relative} to the free factorization (*) with the map
$c_n$ and the prescribed lift $v_n$ on $P_n$.  Notice that we do not
choose unrelated fillers generator by generator: the relative
projective lifting produces one morphism in $\C$, so the required
continuity is preserved even for an infinite convergent family of new
generators. It gives a morphism
\[
 u_n:B_n\longrightarrow Y_n
\]
which extends $v_n$ and satisfies
\[
 b_n^Yu_n=c_n.                               \tag{*****}
\]
Equation (*****) gives all face identities in degree $n$.  Because
$u_n$ extends $v_n$, formula (**) gives all degeneracy identities from
degrees $<n$ into degree $n$.  Also $u_n|_{A_n}=a_n$.  Therefore
$u_{-1},u_0,\ldots,u_n$ form a morphism of augmented simplicial objects
through degree $n$.

Induction over $n$ produces a simplicial morphism
\[
 u:B_\bullet\longrightarrow Y_\bullet
\]
over $\alpha$ extending $a$.  Every lifting used above is a morphism in
$\C$ by Corollary~\ref{cor:relative}; hence continuity is preserved in
all degrees, including the case of infinitely many relative generators.

We emphasize the logical division of the proof. Relative freeness of
$A_\bullet\to B_\bullet$ and projectivity of the free factors provide
continuous lifts once a full boundary is known to be fillable. The
fact that \emph{every} compatible full boundary is fillable is exactly
the resolution hypothesis on $Y_\bullet\to H$; by
Proposition~\ref{prop:keune-simplicial-kernel}, without that hypothesis
the induction can stop on a nonzero homotopy obstruction.
\end{proof}

\section{The cylinder and the comparison theorem}

For a simplicial object $X$ define its cylinder by
\begin{equation}\label{eq:cylinder}
 C(X)_n=\coprod_{\tau:[n]\to[1]}X_n,
\end{equation}
where $\tau$ is order preserving. There are $n+2$ summands.
For a simplicial operator $\theta:[m]\to[n]$, the map from the
$\tau$-summand is $X(\theta)$ followed by the inclusion of the
$\tau\theta$-summand. This defines a simplicial object.
The two constant maps $[n]\to[1]$ give endpoint inclusions
$i_0,i_1:X\to C(X)$, and folding the summands gives $C(X)\to X$.
If $X$ is augmented to $G$, so is $C(X)$.

\begin{lemma}\label{lem:cylinder}
If $X$ is based free, the endpoint inclusion
$X\amalg X\to C(X)$ is a relative free extension.
\end{lemma}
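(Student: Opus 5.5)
Combining the basis~\eqref{eq:basis} with the partition isomorphism~\eqref{eq:partition} writes
\[
 C(X)_n\cong\coprod_{\tau:[n]\to[1]}F(B_n)\cong F\Bigl(\coprod_{\tau}B_n\Bigr),
\]
so $C(X)_n$ is free on the pairs $(\tau,(e,\sigma))$ with $e\in S_m$ and $\sigma\in D(n,m)$. The plan is to reindex this basis in Eilenberg--Zilber form. To each pair $(\tau,(e,\sigma))$ we attach the pair of maps $(\sigma,\tau):[n]\to[m]\times[1]$. This map factors uniquely as a surjection $\rho:[n]\twoheadrightarrow[p]$ followed by an injective map $[p]\hookrightarrow[m]\times[1]$. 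The injective map is the pair $(\sigma',\tau')$ where $\sigma'$ is a surjection with $\sigma=\sigma'\rho$ and $\tau'$ satisfies $\tau=\tau'\rho$; here $p=m$ or $p=m+1$.

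We take as new nondegenerate relative generators
\[
 T_p=\{(e,\sigma',\tau'):e\in S_m,\ \sigma'\in D(p,m),\ (\sigma',\tau')\text{ injective},\ \tau'\text{ nonconstant}\}.
\]
Concretely, $T_m$ consists of the pairs $(e,\tau')$ with $e\in S_m$ and $\tau'$ nonconstant, giving $m$ such $\tau'$ for each $e$. $T_{m+1}$ consists of the pairs $(e,\sigma')$ where $\sigma'=\alpha_j$ and $\tau'$ is the unique map making $(\alpha_j,\tau')$ injective, namely $\tau'(k)=0$ iff $k\le j$. The generators with constant $\tau'$ and $\sigma'=\id$ are exactly the generators of the two copies of $X$. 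Uniqueness of epi--mono factorization in $\Delta$ then gives the set bijection
\[
 \coprod_\tau B_n\;\cong\; B_n^{(0)}\sqcup B_n^{(1)}\sqcup\coprod_{p\le n}\ \coprod_{\rho\in D(n,p)}T_p^{\rho},
\]
where $B^{(0)},B^{(1)}$ are the two copies of the basis of $X$. By~\eqref{eq:partition} this translates into $C(X)_n\cong X_n\amalg X_n\amalg F(\coprod T_p^\rho)$. The partition is finite in each degree, since there are $n+2$ maps $\tau$ and finitely many surjections, so the convergence conventions for infinite $S_m$ are respected.

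Next we check that the degeneracies act as required, $s_i(t,\rho)=(t,\rho\alpha_i)$. In $C(X)$ the degeneracy sends $(\tau,(e,\sigma))$ to $(\tau\alpha_i,(e,\sigma\alpha_i))$. The epi--mono factorization of $(\sigma\alpha_i,\tau\alpha_i)=(\sigma,\tau)\alpha_i$ is obtained by precomposing the epi part with $\alpha_i$, so the injective part is unchanged and the surjection index becomes $\rho\alpha_i$, as needed. The subobject $X\amalg X$ is simplicial because constant $\tau$ stays constant under any $\theta$. The faces of a new generator $t\in T_p$ automatically land in $C(X)_{p-1}$ and are compatible, $(d_0t,\dots,d_pt)\in M_pC(X)$, because $C(X)$ is already a simplicial object. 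The relative-extension definition requires only this compatibility together with the degree-wise decomposition and the degeneracy formula, so nothing further is needed.

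The main obstacle is the combinatorial bookkeeping. We must check that the factorization gives a genuine bijection, in particular that when $\sigma$ is not injective on a fibre of $\tau$, the new degree $p=m+1$ is correct and every injective $(\sigma',\tau')$ with $\sigma'$ surjective arises exactly once. We also need that this matches the prismatic ($\Delta[m]\times\Delta[1]$) nondegenerate simplex count. We would handle this by describing injective monotone maps $[p]\to[m]\times[1]$ with surjective first coordinate as lattice paths, which makes both the count and the uniqueness transparent.
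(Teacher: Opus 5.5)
Your proposal is correct and takes essentially the same route as the paper. The paper's step of collapsing maximal consecutive blocks on which $(\sigma,\tau)$ is constant is exactly your epi--mono factorization of $(\sigma,\tau)\colon[n]\to[m]\times[1]$: ``no common repetition'' means injectivity, constant $\tau$ gives the endpoint copies, the remaining reduced labels are the new generators, and \eqref{eq:partition} assembles the finite partition. Your explicit check of $s_i(t,\rho)=(t,\rho\alpha_i)$ is extra detail the paper leaves implicit. The lattice-path and prism-count verification you list as the main obstacle is unnecessary, because uniqueness of the factorization already gives the bijection.
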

\begin{proof}
By \eqref{eq:basis} and \eqref{eq:cylinder}, the free generators of
$C(X)_n$ have labels
\[
 (\sigma,e,\tau),\quad
 \sigma\in D(n,m),\quad e\in S_m,\quad
 \tau:[n]\to[1].
\]
Here again $\sigma$ is an actual surjection, so no duplicate labels arise
from different degeneracy words. A label is degenerate precisely when
the two monotone maps $\sigma$ and $\tau$ have a common repetition: there is an $i$ with
$\sigma(i)=\sigma(i+1)$ and $\tau(i)=\tau(i+1)$.
Collapse all maximal consecutive blocks on which the pair
$(\sigma,\tau)$ is constant. This gives a unique factorization
\[
 (\sigma,\tau)=(\bar\sigma,\bar\tau)\rho,
 \qquad \rho:[n]\twoheadrightarrow[r],
\]
where the reduced pair has no common repetition. The reduced label
is the unique nondegenerate label from which the original arises.
If $\tau$ is constant, its generators belong to the endpoint object,
and the reduced labels are exactly its ordinary nondegenerate ones.
All other reduced labels give the new relative generators.

For fixed $n$ there are only finitely many pairs of monotone maps
involved. Thus the partition into endpoints, new generators and
their degeneracies is a finite partition into copies of the $S_m$.
Each family retains its convergent-generator topology, and
\eqref{eq:partition} gives the required decomposition in $\C$.
Deleting a vertex sends a new generator to degree $n-1$, where its
image is already part of the lower skeleton. This proves the
relative free assertion, including continuity of the attaching maps.
\end{proof}

\begin{theorem}[Continuous comparison]\label{thm:comparison}
Let $X_\bullet\to G$ be an augmented based free simplicial object
in $\C$, and let $Y_\bullet\to H$ be a resolution. Every morphism
$\alpha:G\to H$ extends to a simplicial morphism
$f:X_\bullet\to Y_\bullet$. Any two extensions $f_0,f_1$ are
simplicially homotopic over $\alpha$, with all homotopy components
morphisms in $\C$.
Consequently any two based free resolutions of the same group are
simplicially homotopy equivalent over that group. No finite-rank or
finite-presentation assumption is needed.
\end{theorem}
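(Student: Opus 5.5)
Existence reduces directly to Lemma~\ref{lem:extension} applied to the relative free extension $0\to X_\bullet$, where $0$ is the trivial simplicial object (the initial object of $\C$ in each degree, i.e.\ $F(\emptyset)$). With $A_\bullet=0$ and $T_m=S_m$, the basis decomposition \eqref{eq:basis} is precisely the relative decomposition $X_n\cong 0\amalg F(\coprod_{m\le n}\coprod_{\sigma\in D(n,m)}S_m^\sigma)$. The attaching data of a nondegenerate generator $e\in S_n$ are its faces, which lie in $X_{n-1}$ and automatically form a compatible full boundary in $M_nX$, because $X$ is already a simplicial object. The unique map $0\to Y_\bullet$ lies over $\alpha$, so Lemma~\ref{lem:extension} produces a simplicial morphism $f:X_\bullet\to Y_\bullet$ over $\alpha$. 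All components are morphisms in $\C$ by that lemma, with no rank restriction.

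For uniqueness up to homotopy, I would use the cylinder $C(X)$ of \eqref{eq:cylinder}, augmented to $G$ through the fold map $C(X)\to X\to G$. By Lemma~\ref{lem:cylinder}, $X\amalg X\to C(X)$ is a relative free extension. Given two extensions $f_0,f_1$ over $\alpha$, the coproduct map $(f_0,f_1):X\amalg X\to Y$ is a simplicial morphism over $\alpha$, since both endpoint inclusions followed by the fold are the identity and hence compatible with the augmentation. Lemma~\ref{lem:extension} then gives $h:C(X)\to Y$ over $\alpha$ with $hi_0=f_0$ and $hi_1=f_1$. Finally, I would translate $h$ into a simplicial homotopy in the standard combinatorial form: for $0\le i\le n$, let $h_i:X_n\to Y_{n+1}$ be the restriction of $h_{n+1}$ to the summand indexed by $\tau_i:[n+1]\to[1]$ (the map that is $0$ on $\{0,\dots,i\}$ and $1$ afterwards), precomposed with the degeneracy $s_i:X_n\to X_{n+1}$. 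The usual homotopy identities then follow from the rule that $\theta$ sends the $\tau$-summand to the $\tau\theta$-summand, together with the identities $\tau_i\delta_j$ and $\tau_i\alpha_j$ computed in $\Delta$. Each $h_i$ is a composite of morphisms in $\C$. Equivalently, and more cleanly, one can note that $C(X)_n=\coprod_{\Delta([n],[1])}X_n$ is the tensor $X\otimes\Delta[1]$, so a map out of it is exactly a combinatorial homotopy. I expect this bookkeeping to be the most error-prone step, though it is formal.

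For the consequence, let $X\to G$ and $X'\to G$ both be based free resolutions. The existence part, with $\alpha=\id_G$, gives maps $f:X\to X'$ and $g:X'\to X$ over $G$. Then $gf$ and $\id_X$ are two extensions of $\id_G$ from the based free $X$ to the resolution $X$, so they are homotopic by the uniqueness part; symmetrically, $fg\simeq\id_{X'}$. No finiteness hypothesis enters anywhere, because every lift is produced as a single morphism in $\C$ through Corollary~\ref{cor:relative}.
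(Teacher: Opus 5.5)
Your proposal is correct and follows the paper's proof essentially step for step. You obtain existence from Lemma~\ref{lem:extension} applied to the extension from the trivial object. You get homotopy uniqueness by extending $(f_0,f_1)$ across the cylinder $C(X)$ via Lemma~\ref{lem:cylinder}, with the same components $h_i=H_{n+1,\tau_i}s_i$, and you deduce the homotopy equivalence from composites over $\id_G$, exactly as the paper does.
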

\begin{proof}
Apply Lemma~\ref{lem:extension} to the relative free extension
$1\to X$ to obtain $f$. For two given extensions, their coproduct
defines $X\amalg X\to Y$ over $\alpha$. By
Lemma~\ref{lem:cylinder} and Lemma~\ref{lem:extension} it extends to
\[
 H:C(X)\longrightarrow Y,\qquad H i_0=f_0,\quad H i_1=f_1.
\]
Here is the explicit passage to the usual simplicial homotopy.
Write $H_{n,\tau}:X_n\to Y_n$ for the restriction to the
$\tau$-summand. For $0\leq i\leq n$ let
$\tau_i:[n+1]\to[1]$ be $0$ on $0,\ldots,i$ and $1$ on
$i+1,\ldots,n+1$, and put
\[
 h_i^n=H_{n+1,\tau_i}s_i:X_n\longrightarrow Y_{n+1}.
\]
All these maps are morphisms in $\C$. The identities follow directly
by precomposing the labels $\tau_i$ and using the simplicial identities
in $X$. More explicitly, they are
\begin{align*}
 d_0h_0&=f_1,&d_{n+1}h_n&=f_0,\\
 d_jh_i&=h_{i-1}d_j &&(j<i),\\
 d_jh_i&=h_i d_{j-1} &&(j>i+1),\\
 d_{i+1}h_i&=d_{i+1}h_{i+1} &&(i<n),\\
 s_jh_i&=h_{i+1}s_j &&(j\leq i),\\
 s_jh_i&=h_i s_{j-1} &&(j>i).
\end{align*}
These give a homotopy from $f_0$ to $f_1$ with the indicated endpoint
convention. The augmentation is preserved because $H$ is over
$\alpha$.

For two based free resolutions of $G$, construct maps in both
directions over $\id_G$. Each composite and the relevant identity
are extensions of $\id_G$. The homotopy uniqueness just proved
therefore makes the maps homotopy inverses.
\end{proof}

\section{Scope and the original application}

\begin{corollary}\label{cor:constant}
Let $P_\bullet\to F(S)$ be a based free resolution in $\C$.
Then $P_\bullet$ is simplicially homotopy equivalent over $F(S)$
to $\Const(F(S))$, by morphisms and homotopies in $\C$.
\end{corollary}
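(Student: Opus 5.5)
The plan is to obtain Corollary~\ref{cor:constant} from Theorem~\ref{thm:comparison} by noting that $\Const(F(S))$ is itself a based free resolution of $F(S)$. Once this is checked, the last assertion of Theorem~\ref{thm:comparison} shows that $P_\bullet$ and $\Const(F(S))$ are simplicially homotopy equivalent over $F(S)$, with all maps and homotopy components morphisms in $\C$.

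\emph{Step 1: $\Const(F(S))$ is based free.} Take $S_0=S$ and $S_m=\emptyset$ for $m\geq1$. Since $[0]$ is terminal, $D(n,0)$ has exactly one element $\sigma_n:[n]\twoheadrightarrow[0]$. Therefore $B_n=S^{\sigma_n}$ and $X_n=F(S)$, which matches \eqref{eq:basis}. The degeneracy rule $s_i(e,\sigma_n)=(e,\sigma_n\alpha_i)=(e,\sigma_{n+1})$ is the identity map on generators, as it should be for a constant object. The faces are also identities. This is consistent because $\sigma_n\delta_i=\sigma_{n-1}$, so the epi--mono factorization (***) has $\iota=\id_{[0]}$. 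All structure maps are identities and hence morphisms in $\C$.

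\emph{Step 2: $\Const(F(S))\to F(S)$ is a resolution.} We must show that every $d^{(n)}$ is an epimorphism. The map $d^{(-1)}=\id$. For $n\geq0$ I compute $Z_nY=M_{n+1}Y$ for $Y=\Const(F(S))$ with $H=F(S)$. The limit defining $M_{n+1}Y$ runs over the category of proper injections into $[n+1]$ together with the augmented vertex $[-1]$, and all transition maps are identities. This category is connected, since every proper face receives a map from $[-1]$. Hence the limit is the diagonal copy of $F(S)$. Concretely, in the tuple description $d_iy_j=d_{j-1}y_i$ forces $y_0=\cdots=y_{n+1}$, and for $n=0$ the condition reads "same image in $H$", which is again equality. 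So $b_{n+1}=(\id,\ldots,\id)$ is an isomorphism onto $M_{n+1}Y$. Alternatively, Proposition~\ref{prop:keune-simplicial-kernel} gives the same conclusion: the Moore complex of a constant object augmented by the identity has $C_n=1$, so all augmented homotopy vanishes.

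\emph{Step 3: apply the theorem.} Apply the final sentence of Theorem~\ref{thm:comparison} with $G=F(S)$, using $P_\bullet$ and $\Const(F(S))$. Explicitly, take $f:P_\bullet\to\Const(F(S))$ extending $\id$; this is forced to be the augmentation in each degree. Take $g:\Const(F(S))\to P_\bullet$ extending $\id$. Then $fg=\id$ holds on the nose. The composite $gf$ and $\id_{P}$ both extend $\id_{F(S)}$, so they are homotopic over $F(S)$, with homotopy components in $\C$.

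The only point requiring care is Step 2: I must verify that the full matching object of the constant augmented object collapses to a single copy of $F(S)$, including the augmented vertex in the $n=0$ case. This needs connectedness of the indexing category, which the augmentation vertex guarantees. Nothing here depends on the rank of $S$, since no generators are lifted individually.
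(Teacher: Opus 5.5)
Your proposal is correct and follows the paper's proof: the paper likewise takes $S_0=S$ and $S_n=\varnothing$ for $n>0$ with the identity augmentation, notes that all matching maps of $\Const(F(S))$ are isomorphisms, and applies Theorem~\ref{thm:comparison}. Your additional details are accurate refinements of that same argument: the connectedness of the indexing category for $M_{n+1}$, and the observation that $fg=\id$ holds on the nose because maps into $\Const(F(S))$ over $\id$ are forced.
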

\begin{proof}
The constant object is a based free resolution: take $S_0=S$ and
$S_n=\varnothing$ for $n>0$, and use the identity augmentation.
All its matching maps are isomorphisms. Apply
Theorem~\ref{thm:comparison}.
\end{proof}

\subsection*{Model-categorical interpretation and Eilenberg--Mac Lane uniqueness}

Quillen's theorem on simplicial objects, together with its cogroup
criterion \cite[Chapter~II, \S4]{QuillenHA}, applies as follows.
Both categories $\C$ have finite limits. Their effective epimorphisms
are precisely the surjective morphisms used in this paper: images are
closed, and a continuous surjection is the coequalizer of its kernel
pair. In the prounipotent case this assertion follows equally from
the corresponding quotient statement for pronilpotent Lie algebras.

There are sufficiently many projectives. Every pro-$p$ group is a
quotient of a free pro-$p$ group
\cite[Sections~7.6--7.7]{RibesZalesskii}. In characteristic zero, for
$U=\exp(\mathfrak u)$ the identity on the underlying linearly compact
space of $\mathfrak u$ extends to a continuous surjection
\[
 \LL(\mathfrak u)\twoheadrightarrow\mathfrak u.
\]
The source is projective by Proposition~\ref{prop:projective} and its
proof. After choosing a topological vector-space isomorphism
$\mathfrak u\cong k^S$, its exponential is a free object $F_k(S)$.
Thus in both categories every object is a quotient of a free
projective object of the kind used above.

These free objects are also cogroup objects. For the coproduct
inclusions $j_1,j_2:F(S)\to F(S)\amalg F(S)$ and canonical
generators $g_s$, the assignments
\[
 g_s\longmapsto j_1(g_s)j_2(g_s),\qquad
 g_s\longmapsto g_s^{-1}
\]
define the comultiplication and coinverse, with the unique map to $1$
as counit. The assignments are convergent, so the universal property
gives morphisms in $\C$; the cogroup identities follow by checking
them on the generators. Consequently
$\Hom_{\C}(F(S),Y_\bullet)$ is a simplicial group and hence a Kan
complex. Every projective object $Q$ is a retract of a free object,
so $\Hom_{\C}(Q,Y_\bullet)$ is a retract of a Kan complex and is
itself Kan.

This verifies Quillen's condition that every object of $s\C$ is
fibrant. His theorem therefore gives a closed simplicial model
structure in which fibrations and weak equivalences are detected by
$\Hom_{\C}(Q,-)$ for all projective objects $Q$; cofibrations are
the maps with the left lifting property against trivial fibrations.
This argument uses the cogroup criterion, without any assumption
that the free objects are small. The direct proof of
Theorem~\ref{thm:comparison} above is independent of this model
structure.

In this model structure, an augmentation $P_\bullet\to\Const(G)$
whose full matching maps are surjective is a trivial fibration, and
in particular a weak equivalence. Indeed, $\Hom_{\C}(Q,-)$ preserves
the finite limits defining matching objects, and projectivity makes
the induced matching maps surjective. Thus
\[
 \Hom_{\C}(Q,P_\bullet)\longrightarrow
 \Const\bigl(\Hom_{\C}(Q,G)\bigr)
\]
has the right lifting property for every simplicial boundary
inclusion, which is the trivial-fibration criterion for simplicial
sets. Hence a free resolution of an arbitrary $G$ and $\Const(G)$
represent the same object of $\operatorname{Ho}(s\C)$. The
$K(G,0)$ terminology refers to this weak homotopy type as a
simplicial group object; the corresponding classifying construction
gives the $K(G,1)$ interpretation.

The actual simplicial homotopy equivalences asserted here have two
specific forms. Theorem~\ref{thm:comparison} gives such an equivalence
between any two based free resolutions of the same $G$: comparison
maps exist in both directions and their composites are homotopic to
the identities. When $G=F(S)$, the constant object is itself a based
free resolution, so Corollary~\ref{cor:constant} gives an actual
simplicial homotopy equivalence with $\Const(F(S))$. For arbitrary
$G$, the weak equivalence $P_\bullet\to\Const(G)$ alone does not
provide a simplicial homotopy inverse over $G$. Such an inverse would
already require a section of $P_0\twoheadrightarrow G$ in $\C$,
which need not exist. All comparison maps and homotopies constructed
in the theorem and corollary are morphisms in the completed category.

For the finite-type completed presentation in \cite{Mikhovich}, this
corollary applies once the resolving property has been established.
It produces an actual simplicial homotopy equivalence in the completed
category, using the original presentation model. It is not a replacement
for the proof of asphericity. In particular, the vanishing of
$\pi_1(P_\bullet)$ alone does not supply the hypothesis that
$P_\bullet$ is a resolution. A constant group with nontrivial group of
components is not contractible.

The lifting used here must also be distinguished from a splitting of the
full boundary map. With the convention
$N_nY=\bigcap_{i=0}^{n-1}\ker d_i$, one has
\[
 \ker b_n=N_nY\cap\ker d_n,
\]
not $\ker b_n=N_nY$ in general. No inverse or homomorphic section of
$b_n$ is asserted. We lift the particular map from a free source,
relative to the part already prescribed. This suffices for both the
comparison and the homotopy constructions.

Finally, finite presentation of the augmented group and finite type of
a chosen resolution are separate existence questions. Neither is used
in Theorem~\ref{thm:comparison}; what matters is the specified free
structure compatible with degeneracies and the resolving property of
the target. In finite type, the lifting step reduces to choosing
finitely many images, and the proof recovers the direct finite-type
adaptation of Keune's argument.

\end{document}